\newtheorem{theorem}{Theorem}
\newtheorem{lemma}[theorem]{Lemma}
\newtheorem{definition}[theorem]{Definition}
\newtheorem{remark}[theorem]{Remark}
\begin{document}

\date{}
\author{Liangquan ZHANG$^{1,2}$\thanks{
Corresponding author. E-mail: xiaoquan51011@163.com. This work was supported
by Marie Curie Initial Training Network (ITN) project: \textquotedblright
Deterministic and Stochastic Controlled System and
Application\textquotedblright , FP7-PEOPLE-2007-1-1-ITN, No. 213841-2 and
National Natural Science Foundation of China Grant 10771122, Natural Science
Foundation of Shandong Province of China Grant Y2006A08 and National Basic
Research Program of China (973 Program, No. 2007CB814900). } \\
%EndAName
{1. School of mathematics, Shandong University, China.}\\
2. Laboratoire de Mathématiques, \\
Université de Bretagne Occidentale,\\
29285 Brest Cédex, France.}
\title{Stochastic Verification Theorem of Forward-Backward Controlled
Systems for Viscosity Solutions}
\maketitle

\begin{abstract}
In this paper, we investigate the controlled systems described by
forward-backward stochastic differential equations with the control
contained in drift, diffusion and generator of BSDEs. A new verification
theorem is derived within the framework of viscosity solutions without
involving any derivatives of the value functions. It is worth to pointing
out that this theorem has wider applicability than the restrictive classical
verification theorems. As a relevant problem, the optimal stochastic
feedback controls for forward-backward systems are discussed as well.
\end{abstract}

\textbf{Key words: }Stochastic optimal control, forward-backward stochastic
differential equations, H-J-B equations, viscosity solutions,
super/sub-differentials, optimal feedback controls.

\section{Introduction}

Since the fundamental work of Pardoux \& Peng [1], the theory of BSDEs and
FBSDEs have become a powerful tool in many fields, such as mathematics
finance, optimal control, stochastic games, partial differential equations
and homogenization etc. Recently, the partially coupled FBSDEs controlled
systems have been studied in [2], [3], and [4], where the authors used the
dynamic programming principle and proved that the value function is to be
the unique viscosity solution of the H-J-B equations. In [5], the authors
investigated the existence of an optimal control for forward-backward
control systems using a verification theorem, of course, under smooth
situation. Hence, as an important part in viscosity theory, a natural
question arises: do verification theorems still hold, with the solutions of
H-J-B equations in the classical sense replaced by the ones in the viscosity
sense and the derivatives involved replaced by the super-differentials or
sub-differentials? For the deterministic and forward stochastic cases, the
answer to the above questions is \textquotedblright
positive\textquotedblright . For more details, see [6], [7], [8] and [9].

The present paper proceeds to give the answer to the above question for
forward-backward stochastic systems.

Throughout this paper, we denoted by $\mathbf{R}^n$ the space of $n$%
-dimensional Euclidean space, by $\mathbf{R}^{n\times d}$ the space the
matrices with order $n\times d$, by $\mathbf{S}^n$ the space of symmetric
matrices with order $n\times n$. $\left\langle \cdot ,\cdot \right\rangle $
and $\left| \cdot \right| $ denote the scalar product and norm in the
Euclidean space, respectively. \textbf{*} appearing in the superscripts
denoted the transpose of a matrix.

Let $T>0$ and let $\left( \Omega ,\mathcal{F},P\right) $ be a complete
probability space, equipped with a $d$-dimensional standard Brownian motion $%
\left\{ W\left( t\right) \right\} _{0\leq t\leq T}.$ For a given $s\in \left[
t,T\right] ,$ we suppose that the filtration $\left\{ \mathcal{F}%
_t^s\right\} _{s\leq t\leq T}$ is generated as the following
\begin{equation*}
\mathcal{F}_t^s=\sigma \left\{ W\left( r\right) -W\left( s\right) ;s\leq
r\leq T\right\} \vee \mathcal{N},
\end{equation*}
where $\mathcal{N}$ contains all $P$-null sets in $\mathcal{F}$. In
particular, if $s=0$ we write $\mathcal{F}_t=\mathcal{F}_t^s.$

Let $\mathcal{X}$ be a Hilbert space with the norm $\left\| \cdot \right\| _{%
\mathcal{X}},$ and $p,$ $1\leq p\leq +\infty ,$ define the set $L_{\mathcal{F%
}}^p\left( a,b;\mathcal{X}\right) =\{\left. \phi \left( \cdot \right)
=\left\{ \phi \left( t,\omega \right) :a\leq t\leq b\right\} \right| \phi
\left( \cdot \right) $ is an $\mathcal{F}_t$-adapted, $\mathcal{X}$-valued
measurable process on $\left[ a,b\right] ,$ and $\mathbf{E}\int_a^b\left\|
\phi \left( t,\omega \right) \right\| _{\mathcal{X}}^p$d$t<+\infty .\}.$

Let $U$ is a given closed set in some Euclidean space $\mathbf{R}^m$. For a
given $s\in \left[ 0,T\right] $, we denote by $\mathcal{U}_{ad}\left(
s,T\right) $ the set of $U$ -valued $\mathcal{F}_t^s$-predictable processes.
For any initial time $s\in \left[ t,T\right] $ and initial state $y\in
\mathbf{R}^d$, we consider the following stochastic control systems
\begin{equation}
\left\{
\begin{array}{l}
\text{d}X^{s,y;u}\left( t\right) =b\left( t,X^{s,y;u}\left( t\right)
,u\left( t\right) \right) \text{d}t+\sigma \left( t,X^{s,y;u}\left( t\right)
,u\left( t\right) \right) \text{d}W_t, \\
\text{d}Y^{s,y;u}\left( t\right) =-f\left( t,X^{s,y;u}\left( t\right)
,Y^{s,y;u}\left( t\right) ,Z^{s,y;u}\left( t\right) ,u\left( t\right)
\right) \text{d}t+Z^{s,y;u}\left( t\right) \text{d}W_t, \\
X^{s,y;u}\left( s\right) =x,\quad Y^{s,y;u}\left( T\right) =\Phi \left(
X^{s,y;u}\left( T\right) \right) .%
\end{array}
\right.  \tag{1.1}
\end{equation}
where
\begin{eqnarray*}
b &:&\mathbf{R}^d\times U\rightarrow \mathbf{R}^d, \\
\sigma &:&\mathbf{R}^d\times U\rightarrow \mathbf{R}^{d\times d}, \\
f &:&\left[ 0,T\right] \times \mathbf{R}^d\times \mathbf{R\times R}^d\times
U\rightarrow \mathbf{R,} \\
\Phi &:&\mathbf{R}^d\rightarrow \mathbf{R.}
\end{eqnarray*}

They satisfy the following conditions

\begin{enumerate}
\item[(H1)] $b$ and $\sigma $ are continuous in $t.$
\end{enumerate}

\begin{enumerate}
\item[(H2)] For some $L>0,$ and all $x,x^{^{\prime }}\in \mathbf{R}^d$, $%
v,v^{^{\prime }}\in U,$ a.s.
\begin{equation*}
\left| b\left( t,x,v\right) -b\left( t,x^{^{\prime }},v^{^{\prime }}\right)
\right| +\left| \sigma \left( t,x,v\right) -\sigma \left( t,x^{^{\prime
}},v^{^{\prime }}\right) \right| \leq L\left( \left| x-x^{^{\prime }}\right|
+\left| v-v^{^{\prime }}\right| \right) .
\end{equation*}
\end{enumerate}

Obviously, under the above assumptions, for any $v\left( \cdot \right) \in
\mathcal{U}_{ad}$, the first control system of (1.1) has a unique strong
solution
\begin{equation*}
\left\{ X^{s,y;u}\left( t\right) ,0\leq s\leq t\leq T\right\} .
\end{equation*}

\begin{enumerate}
\item[(H3)] $f$ and $\Phi $ are continuous in $t.$
\end{enumerate}

\begin{enumerate}
\item[(H4)] For some $L>0$, and all $x,x^{^{\prime }}\in \mathbf{R}^d$, $%
y,y^{^{\prime }}\in \mathbf{R,}z,z^{^{\prime }}\in \mathbf{R}%
^d,v,v^{^{\prime }}\in U,$ a.s.
\begin{eqnarray*}
\ \ \left| f\left( t,x,y,z,v\right) -f\left( t,x^{^{\prime }},y^{^{\prime
}},z^{^{\prime }},v^{^{\prime }}\right) \right| +\left| \Phi \left( x\right)
-\Phi \left( x^{^{\prime }}\right) \right| \\
\ \leq L\left( \left| x-x^{^{\prime }}\right| +\left| y-y^{^{\prime
}}\right| +\left| z-z^{^{\prime }}\right| +\left| v-v^{^{\prime }}\right|
\right) .
\end{eqnarray*}
\end{enumerate}

From the classical theory of BSDEs, we claim that there exists a triple $%
\left( X^{s,y;u},Y^{s,y;u},Z^{ts,y;u}\right) ,$ which is the unique solution
of the FBSDEs (1.1).

Given a control process $u\left( \cdot \right) \in \mathcal{U}_{ad}\left(
s,T\right) $ we consider the following cost functional
\begin{equation}
J\left( s,y;u\left( \cdot \right) \right) =Y^{s,y;u}\left( s\right) ,\qquad
\left( s,y\right) \in \left[ 0,T\right] \times \mathbf{R}^d,  \tag{1.2}
\end{equation}
where the process $Y^{s,y;u}$ is defined by FBSDEs (1.1). It follows from
the uniqueness of the solution of the SDEs and BSDEs that
\begin{eqnarray*}
&&Y^{s,y;u}\left( s+\delta \right) \\
&=&Y^{s+\delta ,X^{s,y;u}\left( t+\delta \right) ;u}\left( s+\delta \right)
\\
&=&J\left( t+\delta ,X^{s,y;u}\left( t+\delta \right) \right) ,\quad \text{%
a.s.}
\end{eqnarray*}

The object of the optimal control problem is to minimize the cost function $%
J\left( s,y;u\left( \cdot \right) \right) $, for a given $\left( s,y\right)
\in \left[ 0,T\right] \times \mathbf{R}^d$ , over all $u\left( \cdot \right)
\in \mathcal{U}_{ad}\left( s,T\right) .$ We denote the above problem by $%
C_{s,y} $ to recall the dependence on the initial time $s$ and the initial
state $y$. The value function is defined as
\begin{equation}
V\left( s,y\right) =\inf\limits_{u\left( \cdot \right) \in \mathcal{U}%
_{ad}\left( s,T\right) }J\left( s,y;u\left( \cdot \right) \right) .
\tag{1.3}
\end{equation}

An admissible pair $\left( X^{\star }\left( \cdot \right) ,u^{\star }\left(
\cdot \right) \right) $ is called optimal for $C_{s,y}$ if $u^{\star }\left(
\cdot \right) $ achieves the minimum of $J\left( s,y;u\left( \cdot \right)
\right) $ over $\mathcal{U}_{ad}\left( s,T\right) .$

As we have known that the verification technique plays an important role in
testing for optimality of a given admissible pair and, especially, in
constructing optimal feedback controls. Let us recall the similar classical
verification theorem as follows.

\begin{theorem}
Let $W\in C^{1,2}\left( \left[ 0,T\right] \times \mathbf{R}^d\right) $ be a
solution of the following Hamiliton-Jacobi-Bellman (H-J-B) equations:
\begin{equation}
\left\{
\begin{array}{l}
\frac \partial {\partial t}W\left( t,x\right) +H_0\left(
t,x,W,DW,D^2W\right) =0,\quad \left( t,x\right) \in \left[ 0,T\right] \times
\mathbf{R}^d, \\
W\left( T,x\right) =\Phi \left( x\right) ,\quad x\in \mathbf{R}^d.%
\end{array}
\right.  \tag{1.4}
\end{equation}
The Hamilitonian is given by
\begin{equation*}
H_0\left( t,x,W,DW,D^2W\right) =\inf\limits_{u\in U}H\left(
t,x,W,DW,D^2W,u\right) ,
\end{equation*}
where
\begin{eqnarray*}
&&H\left( t,x,\Psi ,D\Psi ,D^2\Psi ,u\right) \\
&=&\frac 12\text{tr}\left( \sigma \sigma ^{*}\left( t,x,u\right) D^2\Psi
\right) +\left\langle D\Psi ,b\left( t,x,u\right) \right\rangle \\
&&+f\left( t,x,\Psi \left( t,x\right) ,D\Psi \left( t,x\right) \cdot \sigma
\left( t,x,u\right) ,u\right) , \\
\left( t,x,u\right) &\in &\left[ 0,T\right] \times \mathbf{R}^d\times U, \\
\Psi &\in &C^{1,2}\left( \left[ 0,T\right] \times \mathbf{R}^d\right) .
\end{eqnarray*}
Here the function $b,\sigma ,f$ and $\Phi $ are supposed to satisfy
(H1)-(H4). Then

1$^{\circ })$
\begin{equation*}
W\left( s,y\right) \leq J\left( s,y;u\left( \cdot \right) \right)
\end{equation*}

for any $\left( s,y\right) \in \left[ 0,T\right] \times \mathbf{R}^d$ and $%
u\left( \cdot \right) \in \mathcal{U}_{ad}\left( s,T\right) .$

2$^{\circ })$ Supposed that a given admissible pair $\left( x^{\star }\left(
\cdot \right) ,u^{\star }\left( \cdot \right) \right) ,$ here $x^{\star
}\left( \cdot \right) =X^{\star }\left( \cdot \right) $, for the problem $%
C_{s,y}$ satisfies
\begin{eqnarray*}
&&\ \frac \partial {\partial t}W\left( t,x^{\star }\left( t\right) \right) \\
&&\ +H\left( t,x^{\star }\left( t\right) ,W\left( t,x^{\star }\left(
t\right) \right) ,DW\left( t,x^{\star }\left( t\right) \right) ,D^2W\left(
t,x^{\star }\left( t\right) \right) ,u^{\star }\left( t\right) \right) \\
\ &=&0,\quad P\text{-a.s., a.e. }t\in \left[ s,T\right] ;
\end{eqnarray*}
\begin{equation}
\tag{1.5}
\end{equation}
then $\left( x^{\star }\left( \cdot \right) ,u^{\star }\left( \cdot \right)
\right) $ is an optimal pair for the problem $C_{s,y}.$
\end{theorem}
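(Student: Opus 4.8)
The plan is to reduce the statement to a single application of the Itô formula combined with the comparison theorem for BSDEs. First I would fix an admissible control $u\left( \cdot \right) \in \mathcal{U}_{ad}\left( s,T\right) $ together with its state trajectory $X\left( \cdot \right) =X^{s,y;u}\left( \cdot \right) $, and apply the Itô formula to the $C^{1,2}$ function $W$ along $X$. Setting $\widetilde{Y}\left( t\right) :=W\left( t,X\left( t\right) \right) $ and $\widetilde{Z}\left( t\right) :=DW\left( t,X\left( t\right) \right) \sigma \left( t,X\left( t\right) ,u\left( t\right) \right) $, this produces
\[
d\widetilde{Y}\left( t\right) =\left[ \frac{\partial W}{\partial t}+\left\langle DW,b\right\rangle +\tfrac{1}{2}\text{tr}\left( \sigma \sigma ^{*}D^{2}W\right) \right] dt+\widetilde{Z}\left( t\right) \,dW_{t},
\]
with every coefficient evaluated at $\left( t,X\left( t\right) ,u\left( t\right) \right) $, and with terminal value $\widetilde{Y}\left( T\right) =W\left( T,X\left( T\right) \right) =\Phi \left( X\left( T\right) \right) $, i.e.\ the same terminal datum as $Y^{s,y;u}$.

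Second, I would rewrite this as a backward equation and compare its driver with the one defining $Y^{s,y;u}$. By the definition of the Hamiltonian, the bracketed drift equals $\frac{\partial W}{\partial t}+H\left( t,X,W,DW,D^{2}W,u\right) -f\left( t,X,\widetilde{Y},\widetilde{Z},u\right) $. Invoking the H-J-B equation $\frac{\partial W}{\partial t}+H_{0}=0$ together with $H_{0}=\inf_{v\in U}H\leq H\left( t,X,W,DW,D^{2}W,u\right) $, the drift is bounded below by $-f\left( t,X,\widetilde{Y},\widetilde{Z},u\right) $. Hence $\widetilde{Y}$ solves a BSDE with terminal value $\Phi \left( X\left( T\right) \right) $ whose generator is dominated pointwise, along the trajectory $\left( \widetilde{Y},\widetilde{Z}\right) $, by $f\left( t,X,\cdot ,\cdot ,u\right) $, whereas $Y^{s,y;u}$ solves the BSDE with driver exactly $f\left( t,X,\cdot ,\cdot ,u\right) $ and the same terminal value.

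Third, since $f$ is Lipschitz by (H4), the comparison theorem for BSDEs then yields $\widetilde{Y}\left( t\right) \leq Y^{s,y;u}\left( t\right) $ for all $t\in \left[ s,T\right] $; evaluating at $t=s$ gives $W\left( s,y\right) \leq Y^{s,y;u}\left( s\right) =J\left( s,y;u\left( \cdot \right) \right) $, which is conclusion $1^{\circ }$. For conclusion $2^{\circ }$, I would observe that under the pointwise equality (1.5) satisfied by $\left( x^{\star },u^{\star }\right) $ every inequality above collapses to an equality, so the two drivers coincide along the common trajectory and the comparison becomes the identity $\widetilde{Y}\left( s\right) =Y^{s,y;u^{\star }}\left( s\right) $, that is $W\left( s,y\right) =J\left( s,y;u^{\star }\left( \cdot \right) \right) $; combined with $1^{\circ }$ this shows $J\left( s,y;u^{\star }\left( \cdot \right) \right) \leq J\left( s,y;u\left( \cdot \right) \right) $ for every admissible $u$, so $u^{\star }$ attains the infimum and is optimal.

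The main obstacle I anticipate is purely technical: ensuring that $\widetilde{Z}\left( \cdot \right) =DW\,\sigma $ lies in the space $L_{\mathcal{F}}^{2}$ in which the comparison theorem is formulated, since membership in $C^{1,2}$ alone does not control the growth of the derivatives of $W$. I would handle this either by imposing (or extracting from the ambient hypotheses) polynomial-growth bounds on $DW$ and $D^{2}W$ together with the standard moment estimates for $X$, or, failing that, by localizing with the stopping times $\tau _{n}=\inf \left\{ t\geq s:\left| X\left( t\right) \right| \geq n\right\} $, applying the comparison on $\left[ s,\tau _{n}\right] $ and letting $n\rightarrow \infty $.
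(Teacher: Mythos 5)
Your proof is correct, but it takes a genuinely different route from the paper's. The paper does not prove Theorem 1 directly: it declares that the result follows from the viscosity-solution verification theorem of Section 3 (a $C^{1,2}$ solution is in particular a viscosity solution, and $\left( \partial _{t}W,DW,D^{2}W\right) $ supplies the required element of $D_{t+,x}^{+}W$), whose proof runs through regular conditional probabilities, It\^{o}'s formula for a test function, Fatou's lemma, a Lebesgue-point lemma and a lemma on integrating one-sided difference quotients, all under the additional standing hypotheses (3.2) (one-sided time regularity and uniform semiconcavity). You instead argue directly: It\^{o}'s formula turns $W\left( t,X\left( t\right) \right) $ into the first component of a BSDE with the same terminal datum $\Phi \left( X\left( T\right) \right) $ and a driver dominated, along $( \widetilde{Y},\widetilde{Z}) $, by $f$; the comparison theorem for Lipschitz BSDEs then gives $1^{\circ }$, and under (1.5) the two BSDEs coincide, so uniqueness gives the equality needed for $2^{\circ }$. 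This is essentially a completed version of the computation the paper only sketches in Remark 3 (which addresses the equality in $2^{\circ }$ but not the inequality $1^{\circ }$), and it is arguably cleaner for the smooth case: it exploits the forward-backward structure through its natural tool, and it avoids having to verify that an arbitrary $C^{1,2}$ solution satisfies the semiconcavity and one-sided growth conditions (3.2) demanded by the viscosity theorem, which is not automatic. Your closing caveat about the square-integrability of $\widetilde{Z}=DW\,\sigma $ is the right technical point to flag, and localization by the exit times $\tau _{n}$ is the standard way to dispose of it.
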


The proof follows from Theorem 9 in Section 3 in our paper.

\begin{remark}
By H-J-B equations, (1.5) is equivalent to
the following form
\begin{eqnarray*}
\ \min\limits_{u\in U}H\left( t,x^{\star }\left( t\right) ,W\left(
t,x^{\star }\left( t\right) \right) ,DW\left( t,x^{\star }\left( t\right)
\right) ,D^2W\left( t,x^{\star }\left( t\right) \right) ,u\right) \\
\ =H\left( t,x^{\star }\left( t\right) ,W\left( t,x^{\star }\left( t\right)
\right) ,DW\left( t,x^{\star }\left( t\right) \right) ,D^2W\left( t,x^{\star
}\left( t\right) \right) ,u^{\star }\left( t\right) \right) .
\end{eqnarray*}
Then, an optimal feedback control $u^{\star }\left( t,x\right) $ can be
constructed by minimizing
\begin{equation*}
H\left( t,x,W\left( t,x\right) ,DW\left( t,x\right) ,D^2W\left( t,x\right)
,u\right)
\end{equation*}
over $u\in U.$
\end{remark}
\begin{remark} 
We claim that (1.5) is equivalent to
\begin{equation*}
W\left( s,y\right) =J\left( s,y;u^{\star }\left( \cdot \right) \right) .
\end{equation*}
Actually, we have
\begin{eqnarray*}
&&\Phi \left( X^{\star }\left( T\right) \right) -W\left( s,y\right) \\
&=&W\left( T,X^{\star }\left( T\right) \right) -W\left( s,y\right) \\
&=&\int_s^T\frac{\text{d}}{\text{d}t}W\left( t,x^{\star }\left( t\right)
\right) \text{d}t \\
&=&\int_s^T[\frac \partial {\partial t}W\left( t,x^{\star }\left( t\right)
\right) \\
&&+H\left( t,x^{\star }\left( t\right) ,W\left( t,x^{\star }\left( t\right)
\right) ,DW\left( t,x^{\star }\left( t\right) \right) ,D^2W\left( t,x^{\star
}\left( t\right) \right) ,u^{\star }\left( t\right) \right) \\
&&-f\left( t,x^{\star }\left( t\right) ,W\left( t,x\right) ,DW\left(
t,x^{\star }\left( t\right) \right) \cdot \sigma \left( t,x^{\star }\left(
t\right) ,u^{\star }\left( t\right) \right) ,u^{*}\left( t\right) \right) ]%
\text{d}t \\
&&+\int_s^TW_x\left( t,x^{\star }\left( t\right) \right) \cdot \sigma \left(
t,x^{\star }\left( t\right) ,u^{\star }\left( t\right) \right) \text{d}W_t],
\end{eqnarray*}
which implies
\begin{eqnarray*}
&&W\left( s,y\right) \\
&=&J\left( s,y;u^{\star }\left( \cdot \right) \right) +\int_s^T[\frac
\partial {\partial t}W\left( t,x^{\star }\left( t\right) \right) \\
&&+H\left( t,x^{\star }\left( t\right) ,W\left( t,x^{\star }\left( t\right)
\right) ,DW\left( t,x^{\star }\left( t\right) \right) ,D^2W\left( t,x^{\star
}\left( t\right) \right) ,u^{\star }\left( t\right) \right) ]\text{d}t
\end{eqnarray*}
\end{remark}
It is necessary to point out that in Theorem 1 we need $W\in C^{1,2}\left( %
\left[ 0,T\right] \times \mathbf{R}^{d}\right) .$ However, when we take the
verification function $W$ to be the value function $V,$ as $V$ satisfies the
HJB equations if $V\in C^{1,2}\left( \left[ 0,T\right] \times \mathbf{R}%
^{d}\right) $. Unfortunately, in general the H-J-B equations (1.4) do not
admit smooth solutions, which makes the applicability of the classical
verification theorem very restrictive and is a major deficiency in dynamic
programming theory. As we have known that the viscosity theory of nonlinear
PDEs was launched by Crandall and Lions. In this theory, all the derivatives
involved are replaced by the super-differentials and sub-differentials, and
solution in viscosity sense can be only continuous function (For more
information see in [10]). Besides, since the verification theorems can be
played primary roles in constructing optimal feedback controls, while in
many practical problems H-J-B equations do not admit smooth solutions,
hence, we want to answer the question aforementioned.

Our paper is organized as follows: In Section 2, we introduce some
preliminary results about viscosity solutions and the associated the second
order one-sided super/sub-differentials. In Section 3, a new verification
theorem in term of viscosity solutions and the super-differentials are
established. At last, we show the way to find the optimal feedback controls
in Section 4.

\section{Super-differentials, Sub-differentials, and Viscosity Solutions}

Let $Q$ be an open subset of $\mathbf{R}^n$, and $v:\overline{Q}\rightarrow
\mathbf{R}$ be a continuous function.

\begin{definition}
The second order one-sided super-differentials (resp., sub-differentials) of
$v$ at $\left( t_0,x_0\right) \in \left[ 0,T\right) \times \mathbf{R}^n,$
denoted by $D_{t+,x}^{+}v\left( t_0,x_0\right) $ (resp. $D_{t+,x}^{-}v\left(
t_0,x_0\right) $), is a set defined by
\begin{eqnarray*}
&&D_{t+,x}^{+}v\left( t_0,x_0\right) \\
&=&\{\left. \left( p,q,\Theta \right) \in \mathbf{R\times R}^d\times \mathbf{%
S}^d\right| \\
&&\overline{\lim }_{t\rightarrow t_0+,x\rightarrow x_0}\frac{v\left(
t,x\right) -v\left( t_0,x_0\right) -p\left( t-t_0\right) -\left\langle
q,x-x_0\right\rangle -\frac 12\left( x-x_0\right) ^{*}Q\left( x-x_0\right) }{%
\left| t-t_0\right| +\left| x-x_0\right| ^2} \\
&\leq &0\}\text{.}
\end{eqnarray*}
by (resp.,
\begin{eqnarray*}
&&D_{t+,x}^{-}v\left( t_0,x_0\right) \\
&=&\{\left. \left( p,q,\Theta \right) \in \mathbf{R\times R}^d\times \mathbf{%
S}^d\right| \\
&&\underline{\lim }_{t\rightarrow t_0+,x\rightarrow x_0}\frac{v\left(
t,x\right) -v\left( t_0,x_0\right) -p\left( t-t_0\right) -\left\langle
q,x-x_0\right\rangle -\frac 12\left( x-x_0\right) ^{*}Q\left( x-x_0\right) }{%
\left| t-t_0\right| +\left| x-x_0\right| ^2} \\
&\geq &0\}\text{).}
\end{eqnarray*}
\end{definition}

Let us recall the definition of a viscosity solution for (1.4) from [3] or
[4]

\begin{definition}
An continuous function $v$ on $\left[ 0,T\right] \times \mathbf{R}^n$ is
called a viscosity subsolution (resp., supersolution) of the H-J-B equations
(1.4) if
\begin{equation*}
v\left( T,x\right) \leq \Phi \left( x\right) .
\end{equation*}

and
\begin{equation}
\frac{\partial \varphi }{\partial t}\left( t_0,x_0\right) +\inf\limits_{u\in
U}\left\{ H\left( t_0,x_0,\varphi \left( t_0,x_0\right) ,D\varphi \left(
t_0,x_0\right) ,D^2\varphi \left( t_0,x_0\right) ,u\right) \right\} \geq
\left( \leq \right) 0  \tag{2.1}
\end{equation}
whenever $v-\varphi $ attains a local maximum (resp., minimum) at $\left(
t_0,x_0\right) $ in a right neighborhood of $\left( t_0,x_0\right) $ for $%
\varphi \in C^{1,2}\left( \left[ 0,T\right] \times \mathbf{R}^n\right) .$ A
function $v$ is called a viscosity solution of (1.4) if it is both a
viscosity subsolution and a supersolution of (1.4).
\end{definition}

The equivalence of Definition 4 and the Definition 5 in which derivatives of
test functions are replaced by elements of the second order one-sided sub-
and super-differentials are established with the help of a well-known result
that we present below and whose proof can be found in [13].

\begin{lemma}
Let $\left( t_0,x_0\right) \in \left[ 0,T\right] \times \mathbf{R}^n$ be
given

i) $\left( p,q,\Theta \right) \in D_{t+,x}^{+}v\left( t_0,x_0\right) $ if
and only if there exists $\varphi \in C^{1,2}\left( \left[ 0,T\right] \times
\mathbf{R}^n\right) $ satisfies
\begin{equation*}
\left( \frac{\partial \varphi }{\partial t}\left( t_0,x_0\right) ,D_x\varphi
\left( t_0,x_0\right) ,D^2\varphi \left( t_0,x_0\right) \right) =\left(
p\left( t_0,x_0\right) ,q\left( t_0,x_0\right) ,\Theta \left( t_0,x_0\right)
\right) \text{ }
\end{equation*}
and such that $v-\varphi $ achieves its maximum at $\left( t_0,x_0\right)
\in \left[ 0,T\right] \times \mathbf{R}^n$ from right side on $t$.

ii) $\left( p,q,\Theta \right) \in D_{t+,x}^{-}v\left( t_0,x_0\right) $ if
and only if there exists $\varphi \in C^{1,2}\left( \left[ 0,T\right] \times
\mathbf{R}^n\right) $ satisfies
\begin{equation*}
\left( \frac{\partial \varphi }{\partial t}\left( t_0,x_0\right) ,D_x\varphi
\left( t_0,x_0\right) ,D^2\varphi \left( t_0,x_0\right) \right) =\left(
p\left( t_0,x_0\right) ,q\left( t_0,x_0\right) ,\Theta \left( t_0,x_0\right)
\right)
\end{equation*}
and such that $v-\varphi $ achieves its minimum at $\left( t_0,x_0\right)
\in \left[ 0,T\right] \times \mathbf{R}^n$ from right side on $t$.

Moreover, if $v$ has polynomial growth, i.e., if
\begin{equation}
\left| v\left( t,x\right) \right| \leq C\left( 1+\left| x\right| ^k\right)
\text{ for some }k\geq 1,\text{ }\left( t,x\right) \in \left[ 0,T\right]
\times \mathbf{R}^n,  \tag{2.2}
\end{equation}
then $\varphi $ can be chosen so that $\varphi ,\varphi _t,D\varphi
,D^2\varphi $ satisfy (2.2) (with possibly different constants $C$).
\end{lemma}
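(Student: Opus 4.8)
The plan is to prove both biconditionals by handling the elementary implication through a Taylor expansion and reducing the substantive one to the construction of a single smooth majorant. For the \textbf{``if''} part of i), suppose $\varphi\in C^{1,2}$ carries the prescribed one‑jet in $t$ and two‑jet in $x$ at $(t_0,x_0)$ and $v-\varphi$ has a right‑sided local maximum there. Then $v(t,x)-v(t_0,x_0)\le\varphi(t,x)-\varphi(t_0,x_0)$ for $t\ge t_0$ near $(t_0,x_0)$, and expanding $\varphi$ to second order gives $\varphi(t,x)-\varphi(t_0,x_0)=p(t-t_0)+\langle q,x-x_0\rangle+\tfrac12(x-x_0)^{*}\Theta(x-x_0)+o(|t-t_0|+|x-x_0|^{2})$; dividing by $|t-t_0|+|x-x_0|^{2}$ and passing to the $\limsup$ as $(t,x)\to(t_0,x_0)$ with $t\ge t_0$ gives exactly $(p,q,\Theta)\in D_{t+,x}^{+}v(t_0,x_0)$. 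Statement ii) is identical with $\min$, $\liminf$ and reversed inequality, and more efficiently follows by applying i) to $-v$.

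For the \textbf{``only if''} direction, which is the real content, I would first normalize. Writing $P(t,x)=v(t_0,x_0)+p(t-t_0)+\langle q,x-x_0\rangle+\tfrac12(x-x_0)^{*}\Theta(x-x_0)$ and $w=v-P$, the hypothesis becomes $\limsup_{(t,x)\to(t_0,x_0),\,t\ge t_0}w(t,x)/\rho\le0$, with $\rho:=(t-t_0)+|x-x_0|^{2}$. It then suffices to produce $\psi\in C^{1,2}$ with $\psi(t_0,x_0)=0$, $\partial_t\psi(t_0,x_0)=0$, $D_x\psi(t_0,x_0)=0$, $D^{2}_x\psi(t_0,x_0)=0$ and $w\le\psi$ on a right neighborhood of $(t_0,x_0)$, because then $\varphi:=P+\psi$ has the required derivatives and $v-\varphi=w-\psi$ attains a right‑sided maximum of value $0$ at $(t_0,x_0)$.

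To build $\psi$ I would introduce the one‑sided modulus $m(s)=\sup\{w(t,x)^{+}:t\ge t_0,\ \rho\le s\}$, which is nondecreasing with $m(0)=0$ and, by the hypothesis, $m(s)/s\to0$ as $s\to0+$. Since $m$ is monotone, $w(t,x)\le m(\rho)\le m(2(t-t_0))+m(2|x-x_0|^{2})$, so it is enough to dominate the two pieces separately, setting $\psi(t,x)=h_1(t-t_0)+h_2(|x-x_0|^{2})$ with $h_1,h_2$ majorizing $s\mapsto m(2s)$. Because $C^{1,2}$ requires only one derivative in $t$, the factor $h_1$ need only be $C^{1}$ with $h_1(0)=h_1'(0)=0$, which is immediate from $m(2s)=o(s)$ (extend by $0$ for negative argument to keep it $C^1$). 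The delicate factor is $h_2$: I would exploit that $|x-x_0|^{2}$ vanishes to second order at $x_0$, so that $x\mapsto h_2(|x-x_0|^{2})$ is genuinely $C^{2}$ with null gradient and Hessian at $x_0$ as soon as $h_2\in C^{1}$ satisfies $h_2(0)=h_2'(0)=0$ together with the degenerate condition $s\,h_2''(s)\to0$ (this kills the term $h_2''(\rho)(x-x_0)(x-x_0)^{*}$, which is $O(\rho)$, while $h_2'(\rho)I\to0$). Such a profile is manufactured from the slope modulus $\nu(s)=\sup_{0<\tau\le s}m(2\tau)/\tau\to0$ by repeated averaging and mollification, taking $h_2(s)\sim s\,\kappa(s)$ with $\kappa$ a smooth slowly varying majorant of $\nu$ obeying $s\kappa'(s)\to0$ and $s^{2}\kappa''(s)\to0$.

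Finally, under the polynomial growth bound (2.2), on the slab $\rho\le s$ one has $|x|\le|x_0|+\sqrt{s}$, whence $m(s)\le C(1+s^{k/2})$; choosing $h_1,h_2$ of order $s^{k/2}$ then forces $\psi$ and its derivatives $h_1',h_2',h_2''$ to grow at most polynomially in $x$, while $P$ is quadratic, so $\varphi$ and $\varphi_t,D\varphi,D^{2}\varphi$ all satisfy (2.2). I expect the genuine obstacle to be precisely the construction of the radial profile $h_2$: one must simultaneously secure $h_2\ge m(2\cdot)$, the vanishing $h_2(0)=h_2'(0)=0$, the degenerate condition $s\,h_2''(s)\to0$ that makes $h_2(|x-x_0|^{2})$ a legitimate $C^{2}$ test function with zero Hessian at the base point, and controlled polynomial size — and it is exactly the monotonicity together with the $o(s)$ decay of $m$ that renders these four requirements compatible. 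The remaining verifications (the chain‑rule derivative computations and the one‑sided maximum property) are routine.
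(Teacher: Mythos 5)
The paper does not actually prove this lemma---it defers entirely to the reference [13] (Yong--Zhou)---and your argument is essentially the standard construction found there: a second-order Taylor expansion of the test function for the easy implication, and, for the converse, a smooth one-sided majorant of $w=v-P$ built from the monotone modulus $m$ by iterated averaging so that the radial profile satisfies $h_2(0)=h_2'(0)=0$ and $s\,h_2''(s)\to 0$. Your proof is correct, including the correct identification of that degeneracy condition as what makes $x\mapsto h_2(|x-x_0|^2)$ a legitimate $C^2$ function with vanishing gradient and Hessian at $x_0$, the use of monotonicity to preserve majorization under averaging, and the polynomial-growth bookkeeping for the final assertion.
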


Under the assumptions [H1]-(H4), we have the following results.

\begin{lemma}
There exists a constant $C>0$ such that, for all $0\leq t\leq T,$ $%
x,x^{^{\prime }}\in \mathbf{R}^d,$%
\begin{equation}
\left\{
\begin{array}{l}
\left| V\left( t,x\right) \leq C\left( 1+\left| x\right| \right) \right| ,
\\
\left| V\left( t,x\right) -V\left( t^{^{\prime }},x^{^{\prime }}\right)
\right| \leq C\left( \left| t-t^{^{\prime }}\right| ^{\frac 12}+\left|
x-x^{^{\prime }}\right| \right) .%
\end{array}
\right.  \tag{2.3}
\end{equation}
Moreover, $V$ is a unique solution in the class of continuous functions
which grow at most polynomially at infinity.
\end{lemma}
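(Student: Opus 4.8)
The plan is to treat the three claims in turn, relying throughout on the standard $L^2$ a priori and stability estimates for the decoupled system (1.1) under (H1)--(H4), together with the dynamic programming principle and the comparison theorem available from [2], [3], [4].

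First I would establish the linear growth bound. Fixing a reference value $v_0\in U$, (H2) gives the sublinear growth $|b(t,x,v)|+|\sigma(t,x,v)|\le C(1+|x|+|v-v_0|)$, so that Gronwall's inequality produces the forward moment estimate $\mathbf{E}\big[\sup_{s\le t\le T}|X^{s,y;u}(t)|^2\big]\le C(1+|y|^2)$, with $C$ independent of the admissible control $u(\cdot)$ (which, following [2]--[4], takes values in a bounded set). Substituting this into the standard $L^2$ estimate for the backward equation and using the Lipschitz growth of $\Phi$ and $f$ from (H4) yields $|Y^{s,y;u}(s)|\le C(1+|y|)$ uniformly in $u$. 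Since $Y^{s,y;u}(s)$ is $\mathcal{F}_s^s$-measurable, hence deterministic, and $V(s,y)$ is its infimum over $\mathcal{U}_{ad}(s,T)$, the first line of (2.3) follows immediately.

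Next, for the spatial Lipschitz estimate I would run the trajectories issued from $y$ and $y'$ under the same control. The forward stability estimate gives $\mathbf{E}\big[\sup_t|X^{s,y;u}(t)-X^{s,y';u}(t)|^2\big]\le C|y-y'|^2$, and the corresponding BSDE stability estimate, again via (H4), propagates this to $|Y^{s,y;u}(s)-Y^{s,y';u}(s)|\le C|y-y'|$ uniformly in $u$; since the difference of two infima is bounded by the supremum of the pointwise differences, $|V(s,y)-V(s,y')|\le C|y-y'|$. The $\tfrac12$-H\"older continuity in time is the delicate point and is where I expect the main work to lie. Taking $s<s'$ with the same state $y$, I would use the dynamic programming principle to represent $V(s,y)$ through the backward equation on $[s,s']$ with terminal datum $V(s',X^{s,y;u}(s'))$, and then split $V(s,y)-V(s',y)$ into two contributions: the displacement of the state, controlled by $\mathbf{E}\,|X^{s,y;u}(s')-y|\le C|s-s'|^{1/2}(1+|y|)$ combined with the Lipschitz-in-$x$ bound just obtained, and the running cost $\int_s^{s'}f\,\text{d}t$, which is of order $|s-s'|$. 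Taking the infimum over controls produces the rate $|s-s'|^{1/2}$, and the triangle inequality combines the two estimates into the second line of (2.3).

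Finally, for uniqueness I would not argue from scratch but invoke the comparison principle for viscosity solutions of the HJB equation (1.4) in the class of continuous functions satisfying the polynomial-growth condition (2.2), as established in [2], [3], [4] (see also [10]). One shows by the doubling-of-variables technique and the Crandall--Ishii lemma that every viscosity subsolution is dominated by every viscosity supersolution, the growth condition (2.2) supplying the penalization needed to localize the extremum; applying this to $V$, which is itself a viscosity solution by [2]--[4], gives the asserted uniqueness. The only point requiring care is the reconciliation of the one-sided (right-in-time) super/sub-differentials of Definition~4 with the test-function formulation of Definition~5, but this is precisely the content of Lemma~6, so no further argument is needed.
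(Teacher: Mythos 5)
The paper gives no proof of this lemma at all--it simply refers the reader to [2] or [4]--and your sketch reproduces exactly the standard argument from those references: Gronwall/It\^o a priori and stability estimates for the forward and backward equations to get linear growth and Lipschitz continuity in $x$, the dynamic programming principle to obtain the $\tfrac12$-H\"older rate in $t$, and the comparison principle for viscosity solutions in the polynomial-growth class for uniqueness. Your one caveat (uniformity in $u$ requires the control set to be compact, as in [2]--[4], rather than merely closed as stated in this paper) is well taken, but the approach is essentially the intended one.
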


The proof can be seen in [2] or [4]. Then according to Definition 5 and
Lemma 6, we have the following result.

\begin{lemma}
We claim that
\begin{equation}
\inf\limits_{\left( p,q,\Theta ,u\right) \in D_{t+,x}^{+}v\left( t,x\right)
\times U}\left[ p+H\left( t,x,v,q,\Theta ,u\right) \right] \geq 0,\quad
\forall \left( t,x\right) \in \left[ 0,T\right) \times \mathbf{R}^d.
\tag{2.4}
\end{equation}
\end{lemma}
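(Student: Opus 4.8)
The plan is to read (2.4) as nothing more than the reformulation, in the language of the second order one-sided super-differential, of the assertion that the value function $V$ is a viscosity subsolution of the H-J-B equation (1.4). By Lemma 7 the function $V$ is the unique viscosity solution of (1.4), hence in particular a viscosity subsolution in the sense of Definition 5. The whole argument therefore reduces to transcribing the test-function inequality of Definition 5 into a pointwise inequality over the elements of $D_{t+,x}^{+}V$, and the bridge between the two formulations is exactly the characterization supplied by Lemma 6(i).

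Concretely, I would fix $\left( t,x\right) \in \left[ 0,T\right) \times \mathbf{R}^d$, an arbitrary triple $\left( p,q,\Theta \right) \in D_{t+,x}^{+}V\left( t,x\right) $, and an arbitrary $u\in U$. By Lemma 6(i) there is a test function $\varphi \in C^{1,2}\left( \left[ 0,T\right] \times \mathbf{R}^d\right) $ with
\[
\left( \frac{\partial \varphi }{\partial t}\left( t,x\right) ,\, D_x\varphi \left( t,x\right) ,\, D^2\varphi \left( t,x\right) \right) =\left( p,q,\Theta \right)
\]
such that $V-\varphi $ attains a local maximum at $\left( t,x\right) $ from the right in time. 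Adding a constant to $\varphi $, which alters neither its derivatives nor the maximum property, I may assume $\varphi \left( t,x\right) =V\left( t,x\right) $. Substituting the matched derivative values and $\varphi \left( t,x\right) =V\left( t,x\right) $ into the subsolution inequality (2.1) for this $\varphi $ then gives
\[
p+\inf_{u'\in U}H\left( t,x,V\left( t,x\right) ,q,\Theta ,u'\right) \geq 0.
\]

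Since $H\left( t,x,V,q,\Theta ,u\right) \geq \inf_{u'\in U}H\left( t,x,V,q,\Theta ,u'\right) $ for the fixed $u$, the previous display yields $p+H\left( t,x,V,q,\Theta ,u\right) \geq 0$. As $\left( p,q,\Theta \right) $ and $u$ were arbitrary, taking the infimum over $\left( p,q,\Theta ,u\right) \in D_{t+,x}^{+}V\left( t,x\right) \times U$ produces (2.4). I do not expect a genuine obstacle here; the only points demanding care are the normalization $\varphi \left( t,x\right) =V\left( t,x\right) $, which is what makes the argument $V\left( t,x\right) $ appear inside $H$, and the one-sided-in-time nature of the super-differential, which is already encoded in the ``from the right side on $t$'' clause of Lemma 6 and so transfers automatically. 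The single external input, namely that $V$ is itself a viscosity subsolution, is furnished by Lemma 7 (proved in [2], [4] through the dynamic programming principle).
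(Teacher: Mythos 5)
Your proof is correct and is exactly the argument the paper intends: the paper offers no written proof of this lemma beyond the remark that it ``follows from Definition 5 and Lemma 6,'' and your write-up is precisely the elaboration of that remark (Lemma 6(i) to produce the test function, the harmless normalization $\varphi \left( t,x\right) =v\left( t,x\right) $, then the subsolution inequality (2.1), and the trivial bound of $H$ at a fixed $u$ by its infimum over $U$). The only cosmetic difference is that you phrase everything for the value function $V$ via Lemma 7, whereas the statement is written for a generic viscosity (sub)solution $v$; your argument transfers verbatim to that setting.
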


\section{Stochastic Verification Theorem for Forward-Backward Controlled
Systems}

In this section, we give the stochastic verification theorem for
Forward-Backward Controlled Systems within the framework of viscosity
solutions. Firstly, we need the following two lemmas.

\begin{lemma}
Suppose that (H1)-(H4) hold. Let $\left( s,y\right) \in \left[ 0,T\right)
\times \mathbf{R}^{d}$ be fixed and let $\left( X^{s,y;u}\left( \cdot
\right) ,u\left( \cdot \right) \right) $ be an admissible pair. Define
processes
\begin{equation*}
\left\{
\begin{array}{l}
z_{1}\left( r\right) \doteq b\left( r,X^{s,y;u}\left( r\right) ,u\left(
r\right) \right) , \\
z_{2}\left( r\right) \doteq \sigma \left( r,X^{s,y;u}\left( r\right)
,u\left( r\right) \right) \sigma ^{\ast }\left( r,X^{s,y;u}\left( r\right)
,u\left( r\right) \right) , \\
z_{3}\left( r\right) \doteq f\left( r,X^{s,y;u}\left( r\right)
,Y^{s,y;u}\left( r\right) ,Z^{s,y;u}\left( r\right) ,u\left( r\right)
\right) .%
\end{array}%
\right.
\end{equation*}%
Then
\begin{equation}
\lim\limits_{h\rightarrow 0+}\frac{1}{h}\int_{t}^{t+h}\left\vert z_{i}\left(
r\right) -z_{i}\left( t\right) \right\vert \text{d}r=0,\quad \text{a.e. }%
t\in \left[ 0,T\right] ,\text{ }i=1,2,3.  \tag{3.1}
\end{equation}
\end{lemma}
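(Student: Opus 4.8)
The plan is to read (3.1) as a pathwise instance of the Lebesgue Differentiation Theorem: once we know that for $P$-almost every $\omega$ each trajectory $r\mapsto z_{i}(r,\omega)$ belongs to $L^{1}([0,T])$, the assertion that almost every $t$ is a Lebesgue point of that trajectory is classical, and a Fubini argument upgrades ``a.e.\ $t$ depending on $\omega$'' to the uniform statement ``a.e.\ $t$, $P$-a.s.''

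First I would establish the pathwise integrability. By the standard moment estimates for the decoupled system (1.1) under (H1)--(H4) one has $\mathbf{E}\big[\sup_{0\le r\le T}|X^{s,y;u}(r)|^{2}\big]<+\infty$, $\mathbf{E}\big[\sup_{0\le r\le T}|Y^{s,y;u}(r)|^{2}\big]<+\infty$, and $\mathbf{E}\int_{0}^{T}|Z^{s,y;u}(r)|^{2}\mathrm{d}r<+\infty$; in particular, for $P$-a.e.\ $\omega$ the paths of $X$ and $Y$ are continuous (hence bounded) on $[0,T]$, while $Z(\cdot,\omega)$ and the admissible control $u(\cdot,\omega)$ lie in $L^{2}([0,T])$. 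The linear-growth consequences of (H2) give $|z_{1}(r)|\le C(1+|X(r)|+|u(r)|)$ and $|z_{2}(r)|\le C(1+|X(r)|^{2}+|u(r)|^{2})$, and (H4) gives $|z_{3}(r)|\le C(1+|X(r)|+|Y(r)|+|Z(r)|+|u(r)|)$. Since $[0,T]$ has finite length, $L^{2}([0,T])\hookrightarrow L^{1}([0,T])$, so each of the three trajectories $r\mapsto z_{i}(r,\omega)$ lies in $L^{1}([0,T])$ for $P$-a.e.\ $\omega$.

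Next I would invoke the scalar Lebesgue Differentiation Theorem: for any $g\in L^{1}([0,T])$, almost every $t$ satisfies $\lim_{h\to0+}\frac1h\int_{t}^{t+h}|g(r)-g(t)|\,\mathrm{d}r=0$. Applying this to $g=z_{i}(\cdot,\omega)$ for each $\omega$ in the full-measure set produced above yields, for each such $\omega$, a set $G_{i}(\omega)\subseteq[0,T]$ of full Lebesgue measure on which (3.1) holds. To remove the dependence of the exceptional times on $\omega$, set $N_{i}=\{(t,\omega):\ \text{(3.1) fails}\}$. The averages $\frac1h\int_{t}^{t+h}|z_{i}(r)-z_{i}(t)|\,\mathrm{d}r$ are jointly measurable in $(t,\omega)$, and the limit may be taken along $h=1/n$, so $N_{i}$ is $\mathrm{d}t\otimes\mathrm{d}P$-measurable. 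The previous step shows its $\omega$-sections are Lebesgue-null for $P$-a.e.\ $\omega$, whence $(\mathrm{d}t\otimes\mathrm{d}P)(N_{i})=0$ by Tonelli's theorem; consequently almost every $t$-section of $N_{i}$ is $P$-null, which is precisely (3.1) holding for a.e.\ $t\in[0,T]$, $P$-a.s., and for $i=1,2,3$.

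I expect the only genuine difficulty to be the pathwise $L^{1}$-integrability of $z_{3}$: the generator $f$ depends on the process $Z$, which is merely square-integrable in time and unbounded along trajectories, and the argument relies on the finiteness of the time horizon to embed $L^{2}$ into $L^{1}$. The joint measurability needed to legitimately apply Fubini--Tonelli, and the requirement that the exceptional set of times be made independent of $\omega$, are the remaining subtleties that the final step is designed to resolve.
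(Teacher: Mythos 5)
Your argument is essentially correct for the statement as literally printed, but note first that the paper itself offers no proof of this lemma: it simply refers to [7] and [13]. Your route is the pathwise one: establish $z_{i}(\cdot,\omega)\in L^{1}([0,T])$ for $P$-a.e.\ $\omega$ from the moment estimates and the linear-growth consequences of (H2), (H4), apply the scalar one-sided Lebesgue differentiation theorem path by path, and then use joint measurability plus Tonelli to convert the $\omega$-dependent exceptional time set into a single Lebesgue-null set of times. The proof in the cited references is genuinely different: there the displayed limit carries an expectation, $\lim_{h\to0+}\frac1h\int_{t}^{t+h}\mathbf{E}\left\vert z(r)-z(t)\right\vert\,\mathrm{d}r=0$, and the argument treats $z$ as a Bochner-integrable map from $[0,T]$ into $L^{1}(\Omega)$, approximating by simple (or continuous) processes and invoking the vector-valued Lebesgue point theorem; this makes the exceptional set of times deterministic from the outset and dispenses with the Fubini step. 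What your version buys is elementarity (no Banach-valued integration); what the reference version buys is exactly the form that the proof of Theorem 11 actually consumes, since there one passes (conditional) expectations through the $h\to0+$ limit, and your pathwise limit does not yield $\lim_{h\to0+}\frac1h\int_{t}^{t+h}\mathbf{E}\left\vert z(r)-z(t)\right\vert\,\mathrm{d}r=0$ without an additional domination or uniform-integrability argument for the family $\frac1h\int_{t}^{t+h}\left\vert z(r)-z(t)\right\vert\,\mathrm{d}r$. Two smaller points: the pathwise square-integrability of the control $u(\cdot,\omega)$ that you use is not guaranteed by the paper's definition of $\mathcal{U}_{ad}$ unless $U$ is bounded or an integrability requirement is added (the paper is silent on this), and your reduction of the limit to $h=1/n$ should be justified by the continuity in $h$ of $h\mapsto\frac1h\int_{t}^{t+h}\left\vert z(r)-z(t)\right\vert\,\mathrm{d}r$, which lets you take the $\limsup$ over rational $h$ to secure measurability of the exceptional set.
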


The proof can be found in [7] or [13].

\begin{lemma}
Let $g\in C\left( \left[ 0,T\right] \right) .$ Extend $g$ to $\left( -\infty
,+\infty \right) $ with $g\left( t\right) =g\left( T\right) $ for $t>T,$ and
$g\left( t\right) =g\left( 0\right) ,$ for $t<0.$ Suppose that there is a
integrable function $\rho \in L^{1}\left( 0,T;\mathbf{R}\right) $ and some $%
h_{0}>0,$ such that
\begin{equation*}
\frac{g\left( t+h\right) -g\left( t\right) }{h}\leq \rho \left( t\right)
,\quad \text{a.e. }t\in \left[ 0,T\right] ,\qquad h\leq h_{0}.
\end{equation*}%
Then
\begin{equation*}
g\left( \beta \right) -g\left( \alpha \right) \leq \int_{\alpha }^{\beta
}\limsup_{h\rightarrow 0+}\frac{g\left( t+h\right) -g\left( t\right) }{h}%
\text{d}r,\forall 0\leq \alpha \leq \beta \leq T.
\end{equation*}
\end{lemma}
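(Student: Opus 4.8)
The plan is to express $g(\beta)-g(\alpha)$ as the limit of the integral averages of the forward difference quotients, and then to move that limit inside the integral by a reverse Fatou argument (i.e.\ ordinary Fatou applied to the nonnegative functions $\rho-\frac{g(\cdot+h)-g(\cdot)}{h}$), the integrable majorant $\rho$ supplied by the hypothesis being precisely what makes this legitimate.

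First I would fix a sequence $h_n\downarrow 0$ with $h_n\le h_0$ and compute, by splitting the integral and changing variables $s=t+h_n$ in the first piece,
\[
\int_\alpha^\beta \frac{g(t+h_n)-g(t)}{h_n}\,\text{d}t
=\frac{1}{h_n}\int_\beta^{\beta+h_n} g(s)\,\text{d}s-\frac{1}{h_n}\int_\alpha^{\alpha+h_n} g(s)\,\text{d}s .
\]
Here the extension of $g$ by the constants $g(T)$ and $g(0)$ outside $[0,T]$ is essential: for $t$ near $\beta$ and $h_n$ small the argument $t+h_n$ may leave $[0,T]$, and the extension keeps the quotient defined while preserving continuity at the endpoints. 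Letting $n\to\infty$ and using the continuity of $g$, each average on the right-hand side converges to the value of $g$ at the corresponding endpoint, so the whole expression tends to $g(\beta)-g(\alpha)$.

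The decisive step is the interchange of limit and integral on the left-hand side. For every $n$ the integrand satisfies $\frac{g(t+h_n)-g(t)}{h_n}\le\rho(t)$ for a.e.\ $t$, with $\rho\in L^1(0,T;\mathbf{R})$, so the reverse Fatou lemma gives
\[
\limsup_{n\to\infty}\int_\alpha^\beta \frac{g(t+h_n)-g(t)}{h_n}\,\text{d}t
\le\int_\alpha^\beta \limsup_{n\to\infty}\frac{g(t+h_n)-g(t)}{h_n}\,\text{d}t .
\]
Because the sequence of integrals on the left converges to $g(\beta)-g(\alpha)$, its $\limsup$ equals that limit; and pointwise the sequential upper limit is dominated by $\limsup_{h\to 0+}\frac{g(t+h)-g(t)}{h}$. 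Chaining these two observations with the displayed inequality yields exactly
\[
g(\beta)-g(\alpha)\le\int_\alpha^\beta \limsup_{h\to 0+}\frac{g(t+h)-g(t)}{h}\,\text{d}t ,
\]
which is the assertion, $\alpha,\beta$ being arbitrary.

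I expect the only non-routine point to be this interchange of limit and integral: it is precisely here that the integrability of the one-sided bound $\rho$ is indispensable, since without an integrable majorant from above one cannot in general bound the $\limsup$ of the integrals by the integral of the $\limsup$. The convergence of the boundary averages and the passage from the sequential to the full upper limit are then routine consequences of the continuity of $g$ and the definition of $\limsup$.
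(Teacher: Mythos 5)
Your proposal is correct and follows essentially the same route as the paper: change of variables to identify $\lim_{h\to 0+}\int_\alpha^\beta\frac{g(t+h)-g(t)}{h}\,\mathrm{d}t=g(\beta)-g(\alpha)$ via continuity of $g$ at the endpoints, combined with the reverse Fatou lemma (justified by the integrable majorant $\rho$) to bound this by $\int_\alpha^\beta\limsup_{h\to 0+}\frac{g(t+h)-g(t)}{h}\,\mathrm{d}t$. Your reduction to a sequence $h_n\downarrow 0$ followed by the observation that the sequential upper limit is dominated by the full one is a slightly more careful rendering of the same argument.
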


\begin{proof}
Applying Fatou's Lemma, we have
\begin{eqnarray*}
\int_\alpha ^\beta \rho \left( r\right) \text{d}r &\geq &\int_\alpha ^\beta
\limsup_{h\rightarrow 0+}\frac{g\left( r+h\right) -g\left( r\right) }h\text{d%
}r \\
&\geq &\limsup_{h\rightarrow 0+}\int_\alpha ^\beta \frac{g\left( r+h\right)
-g\left( r\right) }h\text{d}r \\
&=&\limsup_{h\rightarrow 0+}\frac{\int_{\alpha +h}^{\beta +h}g\left(
r\right) \text{d}r-\int_\alpha ^\beta g\left( r\right) \text{d}r}h \\
&=&\limsup_{h\rightarrow 0+}\frac{\int_\beta ^{\beta +h}g\left( r\right)
\text{d}r-\int_\alpha ^{\alpha +h}g\left( r\right) \text{d}r}h \\
&=&g\left( \beta \right) -g\left( \alpha \right) .
\end{eqnarray*}
\end{proof}

The main result in this section is the following.

\begin{theorem}
(\textbf{Verification Theorem}) Assume that (H1)-(H4) hold. Let
\begin{equation*}
v\in C\left( \left[ 0,T\right] \times \mathbf{R}^d\right) ,
\end{equation*}
be a viscosity solution of the H-J-B equations (1.4), satisfying the
following conditions:
\begin{equation}
\left\{
\begin{array}{l}
\text{i) }v\left( t+h,x\right) -v\left( t,x\right) \leq C\left( 1+\left|
x\right| ^m\right) h,\qquad m\geq 0, \\
\quad \text{for all }x\in \mathbf{R}^d,0<t<t+h<T. \\
\text{ii) }v\text{ is semiconcave},\text{ uniformly in }t,\text{i.e}.\text{
there exists }C_0\geq 0\text{ } \\
\quad \text{such that for every }t\in \left[ 0,T\right] ,\text{ }v\left(
t,\cdot \right) -C_0\left| \cdot \right| ^2\text{is concave on }\mathbf{R}^d%
\end{array}
\right.  \tag{3.2}
\end{equation}
Then we have
\begin{equation}
v\left( s,y\right) \leq J\left( s,y;u\left( \cdot \right) \right) ,\text{
for any }\left( s,y\right) \in \left( 0,T\right] \times \mathbf{R}^d\text{
and any }u\left( \cdot \right) \in \mathcal{U}_{ad}\left( s,T\right) .
\tag{3.3}
\end{equation}
Forthurmore, let $\left( s,y\right) \in \left( 0,T\right] \times \mathbf{R}%
^d $ be fixed and let $\left( \overline{X}^{s,y;u}\left( \cdot \right) ,%
\overline{u}\left( \cdot \right) \right) $ be an admissible pair for Problem
$C_{sy}$ such that there exist a function $\varphi \in C^{1,2}\left( \left[
0,T\right] ;\mathbf{R}^d\right) $ and a triple
\begin{equation}
\left( \overline{p},\overline{q},\overline{\Theta }\right) \in \left( L_{%
\mathcal{F}_t}^2\left( s,T;\mathbf{R}\right) \times L_{\mathcal{F}%
_t}^2\left( s,T;\mathbf{R}^d\right) \times L_{\mathcal{F}_t}^2\left( s,T;%
\mathbf{S}^d\right) \right)  \tag{3.4}
\end{equation}
satisfying
\begin{equation}
\left\{
\begin{array}{l}
\left( \overline{p}\left( t\right) ,\overline{q}\left( t\right) ,\overline{%
\Theta }\left( t\right) \right) \in D_{t+,x}^{+}v\left( t,\overline{X}%
^{s,y;u}\left( t\right) \right) , \\
\left( \frac{\partial \varphi }{\partial t}\left( t,\overline{X}%
^{s,y;u}\left( t\right) \right) ,D_x\varphi \left( t,\overline{X}%
^{s,y;u}\left( t\right) \right) ,D^2\varphi \left( t,\overline{X}%
^{s,y;u}\left( t\right) \right) \right) =\left( \overline{p}\left( t\right) ,%
\overline{q}\left( t\right) ,\overline{\Theta }\left( t\right) \right) , \\
\varphi \left( t,x\right) \geq v\left( t,x\right) \quad \forall \left(
t_0,x_0\right) \neq \left( t,x\right) ,\text{ a.e. }t\in \left[ 0,T\right] ,%
\text{ }P\text{-a.s.}%
\end{array}
\right.  \tag{3.5}
\end{equation}
and
\begin{equation}
\mathbf{E}\left[ \int_s^T\left[ \overline{p}\left( t\right) +H\left( t,%
\overline{X}^{s,y;u}\left( t\right) ,\overline{\varphi }\left( t\right) ,%
\overline{p}\left( t\right) ,\overline{\Theta }\left( t\right) ,\overline{u}%
\left( t\right) \right) \right] \text{d}t\right] \leq 0,  \tag{3.6}
\end{equation}
where
\begin{equation*}
\overline{\varphi }\left( t\right) =\varphi \left( t,\overline{X}%
^{s,y;u}\left( t\right) \right) .
\end{equation*}
Then $\left( \overline{X}^{s,y;u}\left( \cdot \right) ,\overline{u}\left(
\cdot \right) \right) $ is an optimal pair for the problem $C_{sy}.$
\end{theorem}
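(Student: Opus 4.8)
The plan is to reproduce, at the level of viscosity solutions, the classical comparison between the trajectory value $v(t,X^{s,y;u}(t))$ and the backward component $Y^{s,y;u}(t)$, and then to read off the optimality assertion from the tightness of that comparison. Throughout I write $(X,Y,Z)=(X^{s,y;u},Y^{s,y;u},Z^{s,y;u})$ and abbreviate $\bar f(t)=f(t,X(t),Y(t),Z(t),u(t))$.

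First I would establish (3.3). Fix $(s,y)$ and $u(\cdot)$. Since $v$ is semiconcave uniformly in $t$ by hypothesis (ii), the set $D_{t+,x}^{+}v(t,X(t))$ is nonempty for a.e. $t$, and one may select measurably a triple $(p(t),q(t),\Theta(t))\in D_{t+,x}^{+}v(t,X(t))$; semiconcavity moreover yields a one-sided quadratic bound $v(t+h,X(t+h))-v(t,X(t))\le p(t)h+\langle q(t),X(t+h)-X(t)\rangle+\tfrac12(X(t+h)-X(t))^{*}\Theta(t)(X(t+h)-X(t))+R_h$ with a remainder $R_h$ controlled uniformly, while (i) controls the one-sided time increment. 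Taking expectations, applying It\^o's formula to the smooth process $X$, and using the Lebesgue-point property of Lemma 8 for $z_1=b$ and $z_2=\sigma\sigma^{*}$, one obtains for a.e. $t$
\[
\limsup_{h\to0+}\frac1h\mathbf{E}\big[v(t+h,X(t+h))-v(t,X(t))\big]\le\mathbf{E}\big[p(t)+\langle q(t),b\rangle+\tfrac12\mathrm{tr}(\sigma\sigma^{*}\Theta(t))\big].
\]
Rewriting $\langle q,b\rangle+\tfrac12\mathrm{tr}(\sigma\sigma^{*}\Theta)=H(t,X,v,q,\Theta,u)-f(t,X,v(t,X),q\sigma,u)$ and invoking the subsolution inequality of Lemma 7, namely $p(t)+H(t,X,v,q,\Theta,u(t))\ge0$, turns this into a differential inequality that compares the effective ``generator'' of $v(t,X(t))$ with $f$ evaluated along $v$.

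The main obstacle is precisely the one hidden inside the classical comparison theorem for BSDEs: $f$ is nonlinear in $(y,z)$, so the increments of $v(t,X(t))$ and of $Y(t)$ cannot be matched term by term, and the candidate martingale integrand $q\sigma$ is not a genuine gradient process. I would resolve this by linearization. Setting $\Delta(t)=v(t,X(t))-Y(t)$, so that $\Delta(T)=0$, the two previous steps show that $\Delta$ satisfies, in the averaged sense made rigorous by Lemma 9, a backward inequality whose driver is dominated by $f(t,X,v(t,X),q\sigma,u)-f(t,X,Y,Z,u)$; by the Lipschitz hypothesis (H4) the latter equals $a(t)\Delta(t)+\langle\beta(t),q\sigma-Z\rangle$ with $\|a\|_\infty,\|\beta\|_\infty\le L$. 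Absorbing the $\beta$-term through a Girsanov change of measure and then applying Gronwall's inequality under the new measure gives $\Delta(t)\le0$ for all $t\in[s,T]$; at $t=s$ this reads $v(s,y)=v(s,X(s))\le Y(s)=J(s,y;u(\cdot))$, which is (3.3). The delicate points are the measurable selection of the super-differential, the uniform control ensuring $h^{-1}\mathbf{E}[R_h]\to0$, and the justification that the $Z$-difference may legitimately be carried by Girsanov even though $v$ has only the surrogate gradient $q\sigma$.

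For the optimality assertion I would fix $(\overline X,\overline u)$ together with the data $(\varphi,\overline p,\overline q,\overline\Theta)$ of (3.4)--(3.5) and argue by exactness. By Lemma 7 the integrand appearing in (3.6) is nonnegative a.e., $P$-a.s., whereas (3.6) forces its expected time-integral to be nonpositive; hence it vanishes a.e., $P$-a.s. Because $\varphi\ge v$ with equality along $\overline X$ and $(\partial_t\varphi,D\varphi,D^{2}\varphi)=(\overline p,\overline q,\overline\Theta)$ there, the process $\varphi(t,\overline X(t))$ is a genuine It\^o process, and the vanishing of that integrand shows that its driver coincides with $f(t,\overline X,\varphi,\overline q\sigma,\overline u)$ along the trajectory, with terminal value $\varphi(T,\overline X(T))=v(T,\overline X(T))=\Phi(\overline X(T))$. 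Thus $\varphi(t,\overline X(t))$ solves exactly the same backward equation as $Y^{s,y;\overline u}$, and uniqueness of the BSDE gives $v(s,y)=\varphi(s,y)=J(s,y;\overline u(\cdot))$. Combining this with (3.3) applied to an arbitrary admissible $u(\cdot)$ yields $J(s,y;\overline u(\cdot))=v(s,y)\le J(s,y;u(\cdot))$ for every $u(\cdot)\in\mathcal{U}_{ad}(s,T)$, so $(\overline X,\overline u)$ is optimal for $C_{sy}$.
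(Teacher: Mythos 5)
Your proof of (3.3) contains a genuine gap, and it is one the paper deliberately avoids. You try to compare $v(t,X(t))$ with $Y(t)$ by linearizing $f$ in $(y,z)$ and absorbing the $z$-difference through a Girsanov change of measure followed by Gronwall. That step requires $v(t,X(t))$ to be a semimartingale with an identifiable martingale integrand, whereas all the superdifferential machinery actually delivers is the time-averaged, expectation-level inequality
\[
\limsup_{h\rightarrow 0+}\frac{1}{h}\mathbf{E}\left[ v\left( t+h,X\left( t+h\right) \right) -v\left( t,X\left( t\right) \right) \right] \leq \mathbf{E}\left[ p\left( t\right) +\left\langle q\left( t\right) ,b\right\rangle +\tfrac{1}{2}\mathrm{tr}\left( \sigma \sigma ^{\ast }\Theta \left( t\right) \right) \right] ,
\]
which can only be integrated via Lemma 10 applied to the deterministic function $g(t)=\mathbf{E}[v(t,X(t))]$. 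There is no martingale part to reweight, so the Girsanov/Gronwall comparison cannot be run; you flag this yourself as a ``delicate point,'' but it is not a technicality --- it is the missing idea. The paper does none of this: it obtains (3.3) in one line from the uniqueness of viscosity solutions (Lemma 6), which identifies $v$ with the value function $V$, and $V\leq J$ holds by the very definition (1.3) of the infimum. A direct proof along your lines would essentially have to reproduce the comparison argument underlying that uniqueness theorem.

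Your proof of the optimality assertion is essentially sound and in fact takes a cleaner route than the paper. Both arguments start from the same exactness observation (the paper's Remark 12): Lemma 7 makes the integrand in (3.6) nonnegative, so (3.6) forces it to vanish $\mathrm{d}t\otimes \mathrm{d}P$-a.e. The paper then proceeds through regular conditional probabilities, a Fatou argument using the dominating functions $\rho ,\rho _{1}$ supplied by (3.2), and Lemma 10, to arrive at the integral inequality $\mathbf{E}[v(T,\overline{X}(T))-v(s,y)]\leq -\mathbf{E}\int_{s}^{T}\overline{f}(t)\,\mathrm{d}t$. You instead note that $\varphi (\cdot ,\overline{X}(\cdot ))$ is a genuine It\^{o} process whose drift, by the exactness, equals $-f(t,\overline{X},\varphi ,\overline{q}\,\overline{\sigma },\overline{u})$ with terminal value $\Phi (\overline{X}(T))$, and conclude by uniqueness of the BSDE that it coincides with $Y^{s,y;\overline{u}}$; this also cleanly resolves the mismatch between $f$ evaluated at $(\overline{\varphi },\overline{q}\sigma )$ and at $(\overline{Y},\overline{Z})$ which the paper glosses over when it passes to $-\mathbf{E}\int_{s}^{T}\overline{f}(t)\,\mathrm{d}t$. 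The only prerequisites are that the exactness holds pointwise (it does) and that $\overline{q}\,\overline{\sigma }$ is square-integrable and adapted (it is, by (3.4)). In short: the second half is acceptable and arguably an improvement; the first half is incomplete.
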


\begin{proof} Firstly, (3.3) follows from the uniqueness of viscosity
solutions of the H-J-B equations (1.4). It remains to show that $\left(
\overline{X}^{s,y;u}\left( \cdot \right) ,\overline{u}\left( \cdot \right)
\right) $ is an optimal.

We now fix $t_0\in \left[ s,T\right] $ such that (3.4) and (3.5) hold at $%
t_0 $ and (3.1) holds at $t_0$ for
\begin{equation*}
\left\{
\begin{array}{l}
z_1\left( \cdot \right) =\overline{b}\left( \cdot \right) , \\
z_2\left( \cdot \right) =\overline{\sigma }\left( \cdot \right) \overline{%
\sigma }\left( \cdot \right) ^{*} \\
z_3\left( \cdot \right) =\overline{f}\left( \cdot \right) .%
\end{array}
\right.
\end{equation*}
We claim that the set of such points is of full measure in $\left[ s,T\right]
$ by Lemma 9. Now we fix $\omega _0\in \Omega $ such that the regular
conditional probability $\mathbf{P}\left( \left. \cdot \right| \mathcal{F}%
_{t_0}^s\right) \left( \omega _0\right) $, given $\mathcal{F}_{t_0}^s$ is
well defined. In this new probability space, the random variables
\begin{equation*}
\overline{X}^{s,y;u}\left( t_0\right) ,\overline{p}\left( t_0\right) ,%
\overline{q}\left( t_0\right) ,\overline{\Theta }\left( t_0\right)
\end{equation*}
are almost surely deterministic constants and equal to
\begin{equation*}
\overline{X}^{s,y;u}\left( t_0,\omega _0\right) ,\overline{p}\left(
t_0,\omega _0\right) ,\overline{q}\left( t_0,\omega _0\right) ,\overline{%
\Theta }\left( t_0,\omega _0\right) ,
\end{equation*}
respectively. We remark that in this probability space the Brownian motion $%
W $ is still the a standard Brownian motion although now $W\left( t_0\right)
=W\left( t_0,\omega _0\right) $ almost surely. The space is now equipped
with a new filtration $\left\{ \mathcal{F}_r^s\right\} _{s\leq r\leq T}$ and
the control process $\overline{u}\left( \cdot \right) $ is adapted to this
new filtration. For $P$-a.s. $\omega _0$ the process $\overline{X}%
^{s,y;u}\left( \cdot \right) $ is a solution of (1.1) on $\left[ t_0,T\right]
$ in $\left( \Omega ,\mathcal{F},\mathbf{P}\left( \left. \cdot \right|
\mathcal{F}_{t_0}^s\right) \left( \omega _0\right) \right) $ with the inial
condition $\overline{X}^{s,y;u}\left( t_0\right) =\overline{X}^{s,y;u}\left(
t_0,\omega _0\right) .$

Then on the probability space $\left( \Omega ,\mathcal{F},\mathbf{P}\left(
\left. \cdot \right| \mathcal{F}_{t_0}^s\right) \left( \omega _0\right)
\right) $, we are going to apply It\^o's formula to $\varphi $ on $\left[
t_0,t_0+h\right] $ for any $h>0,$%
\begin{eqnarray*}
&&\ \varphi \left( t_0+h,\overline{X}^{s,y;u}\left( t_0+h\right) \right)
-\varphi \left( t_0,\overline{X}^{s,y;u}\left( t_0\right) \right) \\
\ &=&\int_{t_0}^{t_0+h}\left[ \frac{\partial \varphi }{\partial t}\left( r,%
\overline{X}^{s,y;u}\left( r\right) \right) +\left\langle D_x\varphi \left(
r,\overline{X}^{s,y;u}\left( r\right) \right) ,\overline{b}\left( r\right)
\right\rangle \right. \\
&&\ \left. +\frac 12\text{tr}\left\{ \overline{\sigma }\left( r\right)
^{*}D_{xx}\varphi \left( r,\overline{X}^{s,y;u}\left( r\right) \right)
\overline{\sigma }\left( r\right) \right\} \right] \text{d}r \\
&&\ +\int_{t_0}^{t_0+h}\left\langle D_x\varphi \left( r,\overline{X}%
^{s,y;u}\left( r\right) \right) ,\overline{\sigma }\left( r\right)
\right\rangle \text{d}W_r.
\end{eqnarray*}
Taking conditional expectation value $\mathbf{E}^{\mathcal{F}_{t_0}^s}\left(
\cdot \right) \left( \omega _0\right) ,$ dividing both sides by $h$, and
using (3.5), we have
\begin{eqnarray*}
&&\frac 1h\mathbf{E}^{\mathcal{F}_{t_0}^s\left( \omega _0\right) }\left[
v\left( t_0+h,\overline{X}^{s,y;u}\left( t_0+h\right) \right) -v\left( t_0,%
\overline{X}^{s,y;u}\left( t_0\right) \right) \right] \\
&\leq &\frac 1h\mathbf{E}^{\mathcal{F}_{t_0}^s\left( \omega _0\right) }\left[
\varphi \left( t_0+h,\overline{X}^{s,y;u}\left( t_0+h\right) \right)
-\varphi \left( t_0,\overline{X}^{s,y;u}\left( t_0\right) \right) \right] \\
&=&\frac 1h\mathbf{E}^{^{\mathcal{F}_{t_0}^s\left( \omega _0\right)
}}\left\{ \int_{t_0}^{t_0+h}\left[ \frac{\partial \varphi }{\partial t}%
\left( r,\overline{X}^{s,y;u}\left( r\right) \right) +\left\langle
D_x\varphi \left( r,\overline{X}^{s,y;u}\left( r\right) \right) ,\overline{b}%
\left( r\right) \right\rangle \right. \right. \\
&&\left. \left. +\frac 12\text{tr}\left\{ \overline{\sigma }\left( r\right)
^{*}D_{xx}\varphi \left( r,\overline{X}^{s,y;u}\left( r\right) \right)
\overline{\sigma }\left( r\right) \right\} \right] \text{d}r\right\}
\end{eqnarray*}
\begin{equation}
\tag{3.7}
\end{equation}

Letting $h\rightarrow 0,$ and employing the similar delicate method as in
the proof of Theorem 4.1 of Gozzi et al. [12], we have
\begin{eqnarray*}
&&\frac 1h\limsup_{h\rightarrow 0+}\mathbf{E}^{\mathcal{F}_{t_0}^s\left(
\omega _0\right) }\left[ v\left( t_0+h,\overline{X}^{s,y;u}\left(
t_0+h\right) \right) -v\left( t_0,\overline{X}^{s,y;u}\left( t_0\right)
\right) \right] \\
&\leq &\frac{\partial \varphi }{\partial t}\left( t_0,\overline{X}%
^{s,y;u}\left( t_0,\omega _0\right) \right) +\left\langle D_x\varphi \left(
t_0,\overline{X}^{s,y;u}\left( t_0,\omega _0\right) \right) ,\overline{b}%
\left( t_0\right) \right\rangle \\
&&+\frac 12\text{tr}\left\{ \overline{\sigma }\left( t_0\right)
^{*}D_{xx}\varphi \left( t_0,\overline{X}^{s,y;u}\left( t_0,\omega _0\right)
\right) \overline{\sigma }\left( t_0\right) \right\} \\
&=&\overline{p}\left( t_0,\omega _0\right) +\left\langle \overline{q}\left(
t_0,\omega _0\right) ,\overline{b}\left( t_0\right) \right\rangle +\frac 12%
\text{tr}\left\{ \overline{\sigma }\left( t_0\right) ^{*}\overline{\Theta }%
\left( t_0,\omega _0\right) \overline{\sigma }\left( t_0\right) \right\}
\end{eqnarray*}
By (3.2), we know, from [12], that there exist
\begin{equation*}
\rho \in L^1\left( t_0,T;\mathbf{R}\right) \text{ and }\rho _1\in L^1\left(
\Omega ;\mathbf{R}\right)
\end{equation*}
such that
\begin{equation}
\mathbf{E}\left[ \frac 1h\left[ v\left( t+h,\overline{X}^{s,y;u}\left(
t+h\right) \right) -v\left( t,\overline{X}^{s,y;u}\left( t\right) \right) %
\right] \right] \leq \rho \left( t\right) ,\text{ for }h\leq h_0\text{, for
some }h_0>0,  \tag{3.8}
\end{equation}
and
\begin{eqnarray*}
&&\mathbf{E}^{\mathcal{F}_{t_0}^s\left( \omega _0\right) }\left[ \frac 1h%
\left[ v\left( t+h,\overline{X}^{s,y;u}\left( t+h\right) \right) -v\left( t,%
\overline{X}^{s,y;u}\left( t\right) \right) \right] \right] \\
&\leq &\rho _1\left( \omega _0\right) ,\text{ for }h\leq h_0\text{, for some
}h_0>0.
\end{eqnarray*}
\begin{equation}
\tag{3.9}
\end{equation}
holds, respectively.
By virtue of Fatou's Lemma, noting (3.9), we obtain
\begin{eqnarray*}
&&\limsup_{h\rightarrow 0+}\frac 1h\mathbf{E}\left[ v\left( t_0+h,\overline{X%
}^{s,y;u}\left( t_0+h\right) \right) -v\left( t_0,\overline{X}^{s,y;u}\left(
t_0\right) \right) \right] \\
&=&\limsup_{h\rightarrow 0+}\frac 1h\mathbf{E}\left[ \mathbf{E}^{\mathcal{F}%
_{t_0}^s\left( \omega _0\right) }\left\{ v\left( t_0+h,\overline{X}%
^{s,y;u}\left( t_0+h\right) \right) -v\left( t_0,\overline{X}^{s,y;u}\left(
t_0\right) \right) \right\} \right] \\
&\leq &\mathbf{E}\left[ \limsup_{h\rightarrow 0+}\frac 1h\mathbf{E}^{%
\mathcal{F}_{t_0}^s\left( \omega _0\right) }\left\{ v\left( t_0+h,\overline{X%
}^{s,y;u}\left( t_0+h\right) \right) -v\left( t_0,\overline{X}^{s,y;u}\left(
t_0\right) \right) \right\} \right] \\
&\leq &\mathbf{E}\left[ \overline{p}\left( t_0\right) +\left\langle
\overline{q}\left( t_0\right) ,\overline{b}\left( t_0\right) \right\rangle
+\frac 12\text{tr}\left\{ \overline{\sigma }\left( t_0\right) ^{*}\overline{%
\Theta }\left( t_0\right) \overline{\sigma }\left( t_0\right) \right\} %
\right] ,
\end{eqnarray*}
\begin{equation}
\tag{3.10}
\end{equation}
for a.e. $t_0\in \left[ s,T\right] .$ Then the rest of the proof goes
exactly as in [11]. We apply Lemma 10 to
\begin{equation*}
g\left( t\right) =\mathbf{E}\left[ v\left( t,\overline{X}^{s,y;u}\left(
t\right) \right) \right] ,
\end{equation*}
using (3.8), (3.6) and (3.10) to get
\begin{eqnarray*}
&&\mathbf{E}\left[ v\left( T,\overline{X}^{s,y;u}\left( T\right) \right)
-v\left( s,y\right) \right] \\
&\leq &\mathbf{E}\left\{ \int_s^T\left[ \overline{p}\left( t\right)
+\left\langle \overline{q}\left( t\right) ,\overline{b}\left( t\right)
\right\rangle +\frac 12\text{tr}\left[ \overline{\sigma }\left( t\right) ^{*}%
\overline{\Theta }\left( t\right) \overline{\sigma }\left( t\right) \right]
\text{d}t\right] \right\} \\
&\leq &-\mathbf{E}\left[ \int_s^T\overline{f}\left( t\right) \text{d}t\right]
.
\end{eqnarray*}
From this we claim that
\begin{eqnarray*}
v\left( s,y\right) &\geq &\mathbf{E}\left[ v\left( T,\overline{X}%
^{s,y;u}\left( T\right) \right) +\int_s^T\overline{f}\left( t\right) \text{d}%
t\right] \\
&=&\mathbf{E}\left[ \Phi \left( \overline{X}^{s,y;u}\left( T\right) \right)
+\int_s^T\overline{f}\left( t\right) \text{d}t\right] .
\end{eqnarray*}
Thus, combining the above with the first assertion (3.3), we prove the $%
\left( \overline{X}^{s,y;u}\left( \cdot \right) ,\overline{u}\left( \cdot
\right) \right) $ is an optimal pair. The proof is complete.
\end{proof}

\begin{remark}
The condition (3.6) is just equivalent to the following:
\begin{eqnarray*}
\overline{p}\left( t\right) &=&\min\limits_{u\in U}H\left( t,\overline{X}%
^{s,y;u}\left( t\right) ,\overline{\varphi }\left( t\right) ,\overline{q}%
\left( t\right) ,\overline{\Theta }\left( t\right) ,u\right) \\
&=&H\left( t,\overline{X}^{s,y;u}\left( t\right) ,\overline{\varphi }\left(
t\right) ,\overline{q}\left( t\right) ,\overline{\Theta }\left( t\right) ,%
\overline{u}\left( t\right) \right) , \\
\text{a.e. }t &\in &\left[ s,T\right] ,\text{ }P\text{-a.s.,}
\end{eqnarray*}
\begin{equation}
\tag{3.11}
\end{equation}
where $\overline{\varphi }\left( t\right) $ is defined in Theorem 11. This is
easily seen by recalling the fact that $v$ is the viscosity solution of
(1.4):
\begin{equation*}
\overline{p}\left( t\right) +\min\limits_{u\in U}H\left( t,\overline{X}%
^{s,y;u}\left( t\right) ,\overline{\varphi }\left( t\right) ,\overline{q}%
\left( t\right) ,\overline{\Theta }\left( t\right) ,u\right) \geq 0,
\end{equation*}
which yields (3.11) under (3.6).
\end{remark}

\section{Optimal Feedback Controls}

In this section, we describe the method to construct optimal feedback
controls by the verification Theorem 11 obtained. First, let us recall the
definition of admissible feedback controls.

\begin{definition}
A measurable function $\mathbf{u}$ from $\left[ 0,T\right] \times \mathbf{R}%
^d$ to $U$ is called an admissible feedback control if for any $\left(
s,y\right) \in \left[ 0,T\right) \times \mathbf{R}^d$ there is a weak
solution $X^{s,y;u}\left( \cdot \right) $ of the following SDEs:
\begin{equation}
\left\{
\begin{array}{l}
\text{d}X^{s,y;u}\left( t\right) =b\left( t,X^{s,y;u}\left( t\right) ,%
\mathbf{u}\left( t\right) \right) \text{d}t+\sigma \left( t,X^{s,y;u}\left(
t\right) ,\mathbf{u}\left( t\right) \right) \text{d}W\left( t\right) , \\
\text{d}Y^{s,y;u}\left( t\right) =-f\left( t,X^{s,y;u}\left( t\right)
,Y^{s,y;u}\left( t\right) ,\mathbf{u}\left( t\right) \right) \text{d}t+\text{%
d}M^{s,y;u}\left( t\right) , \\
X^{s,y;u}\left( s\right) =x,\quad Y^{s,y;u}\left( T\right) =\Phi \left(
X^{s,y;u}\left( T\right) \right) ,%
\end{array}
\right.  \tag{4.1}
\end{equation}
where $M^{s,y;u}$ is an $\mathbf{R}$-valued $\mathbb{F}^{s,y;u}$-adapted
right continuous and left limit martingale vanishing in $t=0$ which is
orthogonal to the driving Brownian motion $W.$ Here $\mathbb{F}%
^{s,y;u}=\left( \mathcal{F}_t^{X^{s,y;u}}\right) _{t\in \left[ s,T\right] }$
is the smallest filtration and generated by $X^{s,y;u}$, which is such that $%
X^{s,y;u}$ is $\mathbb{F}^{s,y;u}$-adapted. Obviously, $M^{s,y;u}$ is a part
of the solution of BSDEs of (4.1). Simultaneously, we suppose that $f$
satisfies the Lipschitz condition.
\begin{eqnarray*}
\left| f\left( t,x,y,u\right) -f\left( t,x^{^{\prime }},y^{^{\prime
}},u^{^{\prime }}\right) \right| \leq L\left( \left| x-x^{^{\prime }}\right|
+\left| y-y^{^{\prime }}\right| +\left| u-u^{^{\prime }}\right| \right) \\
x,x^{^{\prime }}\in \mathbf{R}^d,y,y^{^{\prime }}\in \mathbf{R,}\text{ }%
u,u^{^{\prime }}\in U.
\end{eqnarray*}

An admissible feedback control $\mathbf{u}^{\star }$ is called optimal if $%
\left( X^{\star }\left( \cdot ;s,y\right) ,\mathbf{u}^{\star }\left( \cdot
,X^{\star }\left( \cdot ;s,y\right) \right) \right) $ is optimal for the
problem $C_{s,y}$ for each $\left( s,y\right) $ is a solution of (4.1)
corresponding to $\mathbf{u}^{\star }.$
\end{definition}

\begin{theorem}
Let $\mathbf{u}^{\star }$ be an admissible feedback control and $p^{\star
},q^{\star },$ and $\Theta ^{\star }$ be measurable functions satisfying
\begin{equation}
\left( p^{\star }\left( t,x\right) ,q^{\star }\left( t,x\right) ,\Theta
\left( t,x\right) \right) \in D_{t+,x}^{+}V\left( t,x\right)  \tag{4.2}
\end{equation}
for all $\left( t,x\right) \in \left[ 0,T\right] \times \mathbf{R}^d.$ If
\begin{eqnarray*}
&&p^{\star }\left( t,x\right) +H\left( t,x,V\left( t,x\right) ,q^{\star
}\left( t,x\right) ,\Theta ^{\star }\left( t,x\right) ,\mathbf{u}^{\star
}\left( t,x\right) \right) \\
&=&\inf\limits_{\left( p,q,\Theta ,u\right) \in D_{t+,x}^{+}V\left(
t,x\right) \times U}\left[ p+H\left( t,x,V\left( t,x\right) ,q,\Theta
,u\right) \right] \\
&=&0
\end{eqnarray*}
\begin{equation}
\tag{4.3}
\end{equation}
for all $\left( t,x\right) \in \left[ 0,T\right] \times \mathbf{R}^d,$ then $%
\mathbf{u}^{\star }$ is optimal.
\end{theorem}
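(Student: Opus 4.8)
The plan is to deduce the statement directly from the Verification Theorem (Theorem 11). Fix an arbitrary $\left( s,y\right) \in \left( 0,T\right] \times \mathbf{R}^d$ and let $X^{\star }\left( \cdot \right) =X^{s,y;\mathbf{u}^{\star }}\left( \cdot \right) $ be the weak solution of the closed-loop system (4.1) driven by the feedback $\mathbf{u}^{\star }$, whose existence is guaranteed by the definition of an admissible feedback control. Setting $\overline{u}\left( t\right) :=\mathbf{u}^{\star }\left( t,X^{\star }\left( t\right) \right) $ produces an admissible pair $\left( X^{\star }\left( \cdot \right) ,\overline{u}\left( \cdot \right) \right) $ for $C_{s,y}$. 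Since optimality of $\mathbf{u}^{\star }$ means precisely that this pair is optimal for every $\left( s,y\right) $, it suffices to verify that $\left( X^{\star },\overline{u}\right) $ satisfies all the hypotheses of Theorem 11 with $v=V$. I would define the three evaluated processes $\overline{p}\left( t\right) :=p^{\star }\left( t,X^{\star }\left( t\right) \right) $, $\overline{q}\left( t\right) :=q^{\star }\left( t,X^{\star }\left( t\right) \right) $ and $\overline{\Theta }\left( t\right) :=\Theta ^{\star }\left( t,X^{\star }\left( t\right) \right) $, which will play the role of the triple in (3.4)--(3.5).

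First I would establish the super-differential inclusion and the accompanying test function. Evaluating (4.2) along the trajectory gives $\left( \overline{p}\left( t\right) ,\overline{q}\left( t\right) ,\overline{\Theta }\left( t\right) \right) \in D_{t+,x}^{+}V\left( t,X^{\star }\left( t\right) \right) $, which is the first line of (3.5). Applying Lemma 6 at each point $\left( t,X^{\star }\left( t\right) \right) $ yields a function $\varphi \in C^{1,2}$ whose first- and second-order derivatives match $\left( \overline{p},\overline{q},\overline{\Theta }\right) $ and for which $V-\varphi $ attains a right maximum there, i.e. $\varphi \geq V$ with equality at the contact point; this is exactly the second and third lines of (3.5), and crucially it forces $\overline{\varphi }\left( t\right) =\varphi \left( t,X^{\star }\left( t\right) \right) =V\left( t,X^{\star }\left( t\right) \right) $. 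The $L^2$-integrability requirement (3.4) then follows from the polynomial-growth refinement in Lemma 6 combined with the growth estimate (2.3) of Lemma 7 and the standard moment bounds for $X^{\star }$ under (H1)--(H2).

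The core of the argument is condition (3.6). Because $\overline{\varphi }\left( t\right) =V\left( t,X^{\star }\left( t\right) \right) $, substituting $x=X^{\star }\left( t\right) $ into the optimality relation (4.3) gives, for a.e. $t$ and $P$-a.s.,
\begin{equation*}
\overline{p}\left( t\right) +H\left( t,X^{\star }\left( t\right) ,\overline{\varphi }\left( t\right) ,\overline{q}\left( t\right) ,\overline{\Theta }\left( t\right) ,\overline{u}\left( t\right) \right) =0 .
\end{equation*}
Hence the integrand in (3.6) vanishes identically, so (3.6) holds (with equality). With all the hypotheses of Theorem 11 in force for $v=V$, the Verification Theorem asserts that $\left( X^{\star }\left( \cdot \right) ,\overline{u}\left( \cdot \right) \right) $ is an optimal pair for $C_{s,y}$. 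Since $\left( s,y\right) $ was arbitrary, $\mathbf{u}^{\star }$ is an optimal feedback control, as claimed.

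I expect the main obstacle to lie in the second step, namely the measurability and adaptedness bookkeeping. Lemma 6 furnishes a test function $\varphi $ \emph{pointwise}, but to meet (3.4)--(3.5) one must select $\varphi $ (or a measurable family thereof) consistently along the random trajectory $X^{\star }$ so that $\left( \overline{p},\overline{q},\overline{\Theta }\right) $ are genuinely $\left\{ \mathcal{F}_t\right\} $-adapted and carry the required $L^2$ norms; this is where the polynomial-growth clause of Lemma 6 must be coupled with a measurable-selection argument. A secondary point to check is that $V$ itself fulfils the structural conditions (3.2) demanded by Theorem 11 (the time-Lipschitz bound and the uniform semiconcavity), which should be carried as standing assumptions on the value function, since Lemma 7 alone supplies only the growth and Hölder-in-time regularity. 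Once these are secured, the evaluation of (4.3) along $X^{\star }$ makes the decisive identity (3.6) automatic.
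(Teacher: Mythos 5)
Your proposal is correct and follows essentially the same route as the paper, which proves this theorem in one line by invoking Theorem 11 (the Verification Theorem) with $v=V$; you have simply filled in the details of how the feedback $\mathbf{u}^{\star }$ and the functions $p^{\star },q^{\star },\Theta ^{\star }$ are evaluated along the closed-loop trajectory to produce the data required by (3.4)--(3.6). The technical caveats you raise at the end (measurable selection for the triple and the test function, and the structural conditions (3.2) on $V$) are precisely the points the paper itself defers to Remark 15 and the references [7], [13].
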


\textbf{$Proof$ }From Theorem 11, we get the desired result. \quad $\Box$

\begin{remark}
Actually, it is fairly easy to check that in Eq.(4.1), $Y^{s,y;u}\left(
\cdot \right) $ is determined by $\left( X^{s,y;u}\left( \cdot \right)
,u\left( \cdot \right) \right) .$ Hence, we need to investigate the
conditions imposed in Theorem 11 to ensure the existence and uniqueness of $%
X^{s,y;u}\left( \cdot \right) $ in law and the measurability of the
multifunctions $\left( t,x\right) \rightarrow D_{t+,x}^{+}V\left( t,x\right)
$ to obtain $\left( p^{\star }\left( t,x\right) ,q^{\star }\left( t,x\right)
,\Theta \left( t,x\right) \right) \in D_{t+,x}^{+}V\left( t,x\right) $ that
minimizes (4.3) by virtue of the celebrated Filippov's Lemma. The rest parts
we can get from [7] or [13].
\end{remark}

\textbf{Acknowledgments. }The author would like to thank the anonymous
referee for the careful reading of the manuscript and helpful suggestions.

\end{document}